\documentclass{amsart}
\usepackage{amsmath,amssymb,amsthm,amsfonts,amscd,mathrsfs,mathtools}
\usepackage{tikz}
\usepackage[all]{xy}
\usepackage{enumerate}
\usetikzlibrary{calc}

\newcommand{\lemref}[1]{Lemma~\ref{#1}}

\theoremstyle{plain}
    \newtheorem{thm}{Theorem}[section]
    \newtheorem{lem}[thm]   {Lemma}
    
    \newtheorem{prop}[thm]  {Proposition}

\theoremstyle{definition}
    \newtheorem{defn}[thm]  {Definition}

    \newtheorem{ex}[thm]{Example}
    \newtheorem{rem}[thm]{Remark}

\def\ind{\mathrm{ind}}
\def\pind{\mathrm{p\,ind}}
\def\hind{\mathrm{h\,ind}}
\def\mind{\mathrm{m\,ind}}



\newcommand{\be}{\begin{enumerate}}
\newcommand{\ee}{\end{enumerate}}
\newcommand{\R}{\mathbb{R}}
\newcommand{\Z}{\mathbb{Z}}

\newcommand{\Q}{\mathbb{Q}}

\newcommand{\wt}{\widetilde}

\usepackage{color}


\begin{document}

\title[Poincar\'e--Hopf Theorem for line fields]{The Poincar\'e--Hopf Theorem for line fields revisited}

\author{Diarmuid Crowley}
\author{Mark Grant}

\address{Institute of Mathematics, University of Aberdeen, Fraser Noble Building, Meston Walk, Aberdeen AB24 3UE, UK}
\email{dcrowley@abdn.ac.uk}
\email{mark.grant@abdn.ac.uk}

\date{\today}

\keywords{Poincar\'e--Hopf Theorem, line fields, topological defects, condensed matter physics}
\subjclass[2010]{57R22 (Primary); 57R25, 55M25, 53C80, 76A15 (Secondary)}

\begin{abstract}
A Poincar\'e--Hopf Theorem for line fields with point singularities
on orientable surfaces can be found Hopf's 1956 Lecture
Notes on Differential Geometry.  In 1955 Markus presented such a theorem in all dimensions, but Markus' statement only holds in even dimensions $2k \geq 4$.
In 1984 J\"{a}nich presented a Poincar\'{e}--Hopf theorem for line fields with more complicated singularities and focussed on the complexities arising in the generalised setting.

In this expository note we review the
Poincar\'e--Hopf Theorem for line fields with point singularities,
presenting a careful proof which is valid in all dimensions.


\end{abstract}

\maketitle
\section{Introduction}\label{sec:intro}

A line field is a smooth assignment of a tangent line at each point of a manifold, and may be thought of as a projective analogue of a vector field. More generally a line field may have a singular set where it is undefined. Line fields have come to prominence recently in soft matter physics where they are known also as \emph{nematic fields}, and their singularities as \emph{topological defects}.
In this setting they may be used to mathematically model certain types of ordered media. For example, nematic liquid crystals, which are materials formed of rod shaped molecules with no head or tail, can be reasonably modelled in this way \cite{Frank,KleLav}.
Much of the topological interest in line fields lies in the study and classification of their singularities, and for this the tools of homotopy theory have proven to be useful. We recommend Mermin's influential essay \cite{Mermin} or the colloquium \cite{Alexander} of Alexander \emph{et al} as readable introductions to these ideas.

The classical Poincar\'e--Hopf theorem \cite{HopfP,Milnor} relates the singularities of a vector field
to the Euler characteristic of the underlying manifold.
It states that for a vector field with finitely many isolated zeros on a compact manifold $M$, the sum of the indices at the zeros equals the Euler characteristic of $M$. There is an analogous but less well-known result for line fields with singularities, which is often quoted in the soft matter physics literature, and appears in the mathematical literature in various forms in works of Hopf, Markus, Koschorke and J\"anich (see the discussion below).

In this article we give a careful proof of the following Poincar\'e--Hopf theorem for line fields with singularities.

\begin{thm}\label{maintheorem}
Let $M^m$ be a compact manifold of dimension $m\ge2$, and let $\xi$ be a line field on $M$ with finitely many singularities $x_1,\ldots , x_n$. If $\partial M\neq\varnothing$, we assume additionally that the singularities lie in the interior of $M$, and that the line field is normal to $\partial M$. The projective index $\pind_\xi(x_i)$ of each singularity is defined (see Definition \ref{def:projindex}); it is an integer if $m$ is even, and an integer mod $2$ if $m$ is odd.

Let $\chi(M)$ denote the Euler characteristic of $M$. We have
\[
\sum_{i=1}^n \pind_\xi(x_i) = 2\chi(M),
\]
where the equality is interpreted as congruence mod $2$ when $m$ is odd.
\end{thm}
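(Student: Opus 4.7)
The plan is to reduce the theorem to the classical Poincar\'e--Hopf theorem for vector fields by passing to a (possibly branched) double cover $\pi \colon \tilde M \to M$ on which $\xi$ lifts to a vector field. Remove small open disks $U_i$ about each singularity to obtain $M_0 = M \setminus \bigsqcup_i U_i$; on $M_0$ the line field $\xi$ is a genuine line subbundle of $TM$, classified up to isomorphism by $w_1(\xi) \in H^1(M_0; \Z/2)$. Let $p \colon \tilde M_0 \to M_0$ be the associated double cover, on which $p^* \xi$ is orientable and thus admits a nowhere-zero section $\tilde \xi$. When $m \geq 3$, each boundary sphere $\partial U_i \cong S^{m-1}$ is simply connected, so $p|_{\partial U_i}$ is trivial, and one fills in with two copies of $U_i$ to obtain an unbranched double cover $\tilde M \to M$ with $\chi(\tilde M) = 2\chi(M)$. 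When $m = 2$, $p$ may be non-trivial over $\partial U_i \cong S^1$, and one fills in with a single disk there, producing a double cover branched at such $x_i$; Riemann--Hurwitz then gives $\chi(\tilde M) = 2\chi(M) - B$, where $B$ counts the branched singularities.

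Extending $\tilde \xi$ radially across each filled disk yields a vector field on $\tilde M$ with isolated zeros at the preimages $\tilde x_i$ of the $x_i$, and a local calculation identifies the vector-field index with the projective index. At an unbranched preimage for $m$ even, the double cover $S^{m-1} \to \mathbb{R}P^{m-1}$ is of integer degree two on top homology, so each of the two preimages contributes $\pind_\xi(x_i)/2$, for a total of $\pind_\xi(x_i)$ per singularity. At a branched preimage (only possible when $m = 2$), an explicit polar-coordinate computation---accounting for the Jacobian of $z \mapsto z^2$---shows the single preimage contributes $\pind_\xi(x_i) - 1$. Applying classical Poincar\'e--Hopf to $\tilde \xi$ on $\tilde M$ then gives
\[
\sum_i \pind_\xi(x_i) - B = \chi(\tilde M) = 2\chi(M) - B,
\]
and the $B$'s cancel, establishing the identity for $m$ even.

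The main obstacle is the odd-dimensional case. For $m$ odd the antipodal map on $S^{m-1}$ is orientation-reversing, so the two lifts above an unbranched singularity carry oppositely-signed vector-field indices, and the sum above collapses to the tautology $0 = 0$ (consistent with $\chi(M) = 0$ for closed odd-dimensional $M$, but providing no mod~$2$ information on $\sum \pind_\xi(x_i)$). For this case I would argue via obstruction theory for sections of $P(TM) \to M$: the mod~$2$ projective index at $x_i$ is the primary obstruction class to extending $\xi$ across $x_i$, and summing these identifies with the evaluation of the top mod~$2$ Stiefel--Whitney class $w_m(TM)$ on $[M, \partial M]$, which by Wu's formula equals $\chi(M) \bmod 2$, hence $\equiv 2\chi(M) \equiv 0 \pmod 2$. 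The boundary case $\partial M \neq \varnothing$ is handled by doubling $M$ along $\partial M$---the line field, being normal there, extends smoothly across the double---reducing to the closed case at the cost of a factor of two in $\chi$.
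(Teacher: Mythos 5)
Your proposal follows the same overall strategy as the paper's proof: remove disks around the singularities, pass to the associated double cover of the complement, fill in disks to obtain a (branched, when $m=2$) double cover $\widetilde M\to M$ carrying a vector field, and combine the classical Poincar\'e--Hopf theorem with the Riemann--Hurwitz formula. Your index bookkeeping is correct --- in particular you correctly identify the $-1$ correction at a branch point as coming from the Jacobian of $z\mapsto z^2$ twisting the local frame, which is precisely the point where Markus's original argument fails. The main difference is how this identification is justified: where you assert it via a sketched local computation in polar coordinates, the paper introduces \emph{normal indices} $\ind^\perp_v$ and $\pind^\perp_\xi$ (intersection numbers with the outward normal section rather than with a constant section), proves $\ind^\perp_v(y)=\ind_v(y)+(-1)^{m-1}$ and $\pind^\perp_\xi(x)=\pind_\xi(x)-2$, and then establishes $\pind^\perp_\xi(x)=\sum_{y\in\pi^{-1}(x)}\ind^\perp_v(y)$ uniformly for branched and unbranched singularities by a push--pull argument in the cohomology of the sphere and projective tangent bundles; your claimed local contributions are exactly what these lemmas yield, so your computation would need to be carried out carefully but lands in the right place. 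For $m$ odd the paper is considerably more economical than your obstruction-theoretic detour through $w_m$ and Wu's formula: since $S^{m-1}$ is simply connected for $m\ge3$, the classifying map $S^{m-1}\to P^{m-1}$ at each singularity lifts through the degree-two cover $S^{m-1}\to P^{m-1}$, so every individual projective index is already $0\bmod 2$ and the congruence is trivial. (Your identification of the summed local obstructions with $\langle w_m(TM),[M,\partial M]\rangle$ would also require care with the twisted $\pi_{m-1}(P^{m-1})$-coefficients when $m$ is odd, though the mod $2$ conclusion survives.)
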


  Several statements similar to Theorem \ref{maintheorem} can be found in the mathematical literature, dating back to at least the 1950s. Perhaps the first such appears in the lecture notes of Heinz Hopf \cite[p.113]{Hopf} (where Poincar\'e is credited), and is stated only for orientable surfaces. Another, due to Lawrence Markus, appeared in an article in the Annals of Mathematics \cite[Theorem 2]{Markus}. Although it is stated for all dimensions, counter-examples may be given for surfaces and odd-dimensional manifolds (see Examples \ref{baseball} and \ref{RP3}).

  Our contribution is to give a unified proof of Theorem \ref{maintheorem} valid in all dimensions, thereby correcting the statement of \cite[Theorem 2]{Markus}, and generalising to higher dimensions the result in \cite{Hopf}. The proof we offer in Section \ref{sec:proof} is a correction of the proof in \cite{Markus}. One passes to a double branched cover associated to the line field which supports a vector field with isolated zeros, then applies the classical Poincar\'e--Hopf theorem and the Riemann--Hurwitz formula. The mistake made by Markus \cite{Markus} in the surface case (and rediscovered by the present authors) was in identifying the vector field indices in the double cover in terms of the projective indices in the original manifold. We introduce normal indices in Section 3 below in an attempt to clarify this rather subtle point.

In addition to the work of Hopf and Markus,
Koschorke \cite{Koschorke} and J\"anich \cite{Jaenich1,Jaenich2} have investigated line fields with singularities
in great detail. Koschorke's results \cite[Propositions 1.3 \& 1.8]{Koschorke} give a Poincar\'{e}-Hopf Theorem for line fields which implies Theorem \ref{maintheorem} above when $m\!>\!2$,
but Koschorke's definition of a singular line field is not the same as the one we use.
He considers a line field to be a vector bundle morphism $v:\xi\to TM$ from a line bundle $\xi$ on $M$ to the tangent bundle $TM$, and its singularities to be the points where $v$ drops rank. With this definition, every isolated singularity on a surface
is orientable; i.e.~has even projective index as defined in Definition \ref{def:projindex}.
Consequently the difficulties arising in the surface case when a line field cannot be extended over
a singularity do not arise in Koschorke's setting.

J\"anich \cite{Jaenich1,Jaenich2} investigates line fields with singularities from the viewpoint of obstruction theory (as suggested in \cite[Remark 1.9]{Koschorke}). His definition of a line field with singularities is more general than ours, in that he also considers the case where the singular set may have components of codimension two. Sections 1 and 2 of \cite{Jaenich2} contain a proof of Theorem \ref{maintheorem} along the lines of Koschorke \cite{Koschorke}, but treating $m=2$ as a special case. J\"anich shows that in the surface case, the sum of the projective indices may be viewed as the Poincar\'e dual of the cohomology class obstructing the existence of a line field without singularities \cite[Satz und Definition 1.3]{Jaenich2}. Hence this sum is independent of the particular line field. The value of the sum is then calculated by taking a line field which comes from a vector field \cite[2.3]{Jaenich2}.

It is our hope that this paper generates interest in questions of algebraic and differential topology arising in the theory of soft matter physics.

We thank Robert Bryant, Silke Henkes, Matthias Kreck and John Oprea for useful conversations and references to the literature. We especially thank Silke Henkes for acquainting us with the baseball line field (Example \ref{baseball}), and John Oprea for providing the construction given in Remark \ref{rem:Oprea}. We also thank the anonymous referee for helpful comments.

\section{Definitions and previous results}

Let $M^m$ be a smooth manifold of dimension $m\ge2$ and let $TM \to M$
be the tangent bundle of $M$.
A vector field on $M$ is a smooth
section $v \colon M \to TM$.
If a zero $x$ of $v$ is isolated one can define an integer $\ind_v(x_i)$, the \emph{index} of $v$ at
$x$; see Definition \ref{def:degree}.
Recall that the Euler characteristic of a compact manifold $M$ is defined to be the alternating sum of its Betti numbers:
$$ \chi(M) : = \sum_{i=0}^\infty (-1)^i{\rm rank}\,\bigl(H_i(M; \Q)) \bigr.$$

Let $v$ be a vector field on the compact manifold $M$ with finitely many zeros $\{x_1, \dots, x_n\} \subset M$. If $M$ has a boundary, then we require $v$ to be pointing outwards at all boundary points. The Poincar\'{e}-Hopf Theorem \cite{HopfP,Milnor} states that the Euler characteristic of $M$
agrees with the sum of the indices of $v$:
$$ \chi(M) = \sum_{j=1}^n \ind_v(x_j).$$
The following related statement is well-known, and is also called the Poincar\'{e}-Hopf Theorem by some authors.

\begin{prop}[{\cite[p.552]{AlexandroffHopf}}] \label{prop:PH2}
A closed manifold $M$ admits a non-vanishing vector field if and only if $\chi(M) = 0$.
\end{prop}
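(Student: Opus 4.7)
The plan is to derive the proposition from the classical Poincar\'e--Hopf Theorem quoted above, together with one standard obstruction-theoretic extension.

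For the forward direction, if $v$ is a nowhere-vanishing vector field on $M$ then the sum in the Poincar\'e--Hopf formula is empty, and we conclude $\chi(M)=0$ immediately.

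For the converse, assume $\chi(M)=0$; restricting to a single connected component if necessary, we may assume $M$ is connected. First, by transversality (or using the gradient of a Morse function), choose a smooth vector field $w$ on $M$ with only isolated zeros $x_1,\dots,x_n$. By the classical Poincar\'e--Hopf Theorem,
\[
\sum_{i=1}^n \ind_w(x_i) \;=\; \chi(M)\;=\;0.
\]
Since $M$ is connected, there exists a closed coordinate ball $\overline{B}\subset M$ diffeomorphic to the standard $m$-disk and containing all of the $x_i$ in its interior. (The underlying fact is that any finite subset of a connected manifold lies in some open coordinate ball, obtained by joining the zeros with disjoint arcs and thickening to a handle-shaped ball.) Fix a trivialisation $TM|_{\overline{B}}\cong\overline{B}\times\R^m$; then the restriction of the normalised field $w/\|w\|$ to $\partial\overline{B}\cong S^{m-1}$ defines a map $g\co S^{m-1}\to S^{m-1}$ whose degree equals $\sum_i\ind_w(x_i)=0$.

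Because $\pi_{m-1}(S^{m-1})\cong\Z$ is detected by degree, $g$ is null-homotopic and therefore extends to a (smooth) map $\widetilde{g}\co\overline{B}\to S^{m-1}$. Interpreted through the trivialisation, $\widetilde{g}$ is a nowhere-vanishing vector field on $\overline{B}$ agreeing with $w/\|w\|$ on $\partial\overline{B}$; matching this with a rescaling of $w$ across a collar of $\partial\overline{B}$ produces a nowhere-vanishing vector field on all of $M$. The main obstacle is the geometric packaging of all zeros into a single coordinate ball; once this is done, the result reduces to the familiar fact that a degree-zero self-map of $S^{m-1}$ extends across the disk.
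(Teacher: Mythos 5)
The paper does not actually prove Proposition~\ref{prop:PH2}: it is quoted from Alexandroff--Hopf \cite[p.552]{AlexandroffHopf} and used as a black box (notably in the proof of Theorem~\ref{lineiffvec}), so there is no in-paper argument to compare against. Your proof is the standard one going back to Hopf, and it is essentially sound: the forward implication is the Poincar\'e--Hopf formula with an empty index sum; for the converse you sweep the zeros of a generic field into a single coordinate ball, note that the normalised field on $\partial\overline{B}\cong S^{m-1}$ has degree equal to the total index, hence $\chi(M)=0$, and extend a degree-zero map over the disk. The additivity of the index over the zeros contained in $\overline{B}$ is exactly the argument of \cite[\S 6]{Milnor}, and the extension step is valid since $m-1\ge 1$ and degree classifies homotopy classes of self-maps of $S^{m-1}$. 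The one point to repair is your opening reduction: if $M$ is disconnected, $\chi(M)=0$ does \emph{not} imply that each component has vanishing Euler characteristic (consider the disjoint union of $S^2$ with a genus-two surface), and indeed the proposition as literally stated fails for such an $M$, since the $S^2$ component admits no non-vanishing field. So ``restricting to a single connected component'' is not a legitimate reduction; instead the proposition should be read, as in \cite{AlexandroffHopf}, with $M$ connected, and under that standing hypothesis your argument is complete.
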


We next consider an analogue of Proposition \ref{prop:PH2} for line fields which we now define.
A {\em line field} $\xi$ on $M$ assigns to each
$x\in M$ a smoothly varying one-dimensional linear subspace
$\xi(x)\subset TM_x$ of the tangent space to $M$ at $x$.
To define a line field as a section, we
let $PTM \to M$ be the projectivization of the tangent bundle of $M$;
i.e.~$PTM$ is the quotient space $TM/\simeq$ where $w_1 \simeq w_2$ if $w_1 = \lambda w_2$
for $\lambda \in \R \setminus \{0\}$.

\begin{defn}\label{def:linefield}
A \emph{line field} on $M$ is a smooth section $\xi: M \to PTM$.
\end{defn}

We may also view a line field as a one-dimensional sub-bundle $\xi\subseteq TM$ of the tangent bundle of $M$. When $M$ is endowed with a Riemannian metric, the unit sphere bundle of $\xi$ determines an \emph{associated double cover} $$\pi:\widetilde{M}\coloneqq S\xi\to M.$$
%
Clearly a non-vanishing vector field $v$ gives rise to a line field $\xi$ by taking
$$\xi(x) := \langle v(x)\rangle \subseteq TM_x$$
to be the line spanned by $v(x)$ at each point. Not every line field arises from a vector field in this way, however. In fact, it is easily seen that a line field $\xi$ lifts to a vector field if and only if its associated double cover $\pi:\widetilde{M}\to M$ is trivial.

Despite the above observation, it turns out that
the existence of a line field on $M$ is equivalent to the existence of a non-vanishing vector field on $M$. The proof we give below (due to Markus \cite{Markus}) is non-constructive, but is a useful warm-up for the proof of Theorem \ref{maintheorem}.

Note that a line field $\xi$ on $M$ defines a canonical vector field $\wt \xi$ on the total space of the double cover $\wt M$.  To define the vector field $\wt \xi$, we write points in $\wt M$ as pairs $(x, w)$ where $w \in \xi(x)$ has unit norm, and we identify $T\wt M_{(x, w)} = TM_x$.  Then we define
\[ \wt \xi(x, w) = w \in T\wt M_{(x, w)} = TM_x.   \]

\begin{thm}\label{lineiffvec}
A manifold $M$ admits a line field if and only if it admits a non-vanishing vector field.
\end{thm}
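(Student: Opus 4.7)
The ``if'' direction is immediate from the observation already made in the excerpt: a non-vanishing vector field $v$ gives rise to the line field $\xi(x)=\langle v(x)\rangle$. So the content of the theorem is the converse, and the natural plan is to apply Proposition~\ref{prop:PH2} twice, using the double cover $\pi\colon \widetilde M \to M$ associated to $\xi$ as the bridge.

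Suppose $M$ is closed and admits a line field $\xi$. First I would observe that the canonical vector field $\widetilde\xi$ on $\widetilde M$ constructed just before the theorem statement is \emph{non-vanishing}: at a point $(x,w)\in\widetilde M$ the value $\widetilde\xi(x,w)=w$ is by definition a unit vector in $\xi(x)\subset TM_x$, and hence nonzero in $T\widetilde M_{(x,w)}$ under the identification $T\widetilde M_{(x,w)}=TM_x$. Since $\widetilde M$ inherits a smooth closed manifold structure from $M$ (it is the unit sphere bundle of the line sub-bundle $\xi\subseteq TM$), Proposition~\ref{prop:PH2} applied to $\widetilde M$ gives $\chi(\widetilde M)=0$.

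Next I would use multiplicativity of the Euler characteristic under finite covers: for the double cover $\pi\colon\widetilde M\to M$ one has
\[
\chi(\widetilde M)=2\,\chi(M).
\]
This is standard (e.g.\ from a CW-structure on $M$ lifted to $\widetilde M$, each cell contributes twice). Combining with the previous step yields $\chi(M)=0$, and Proposition~\ref{prop:PH2} applied again, now to $M$, produces a non-vanishing vector field on $M$.

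The argument is essentially a two-line chain once the ingredients are in place, and no step is genuinely difficult; the only point requiring a little care is verifying that $\widetilde M$ really is a smooth closed manifold to which Proposition~\ref{prop:PH2} applies, and checking the identification $T\widetilde M_{(x,w)}=TM_x$ so that $\widetilde\xi$ is a bona fide smooth vector field rather than merely a section of $\pi^*TM$. Note that the construction is non-constructive in the sense flagged before the statement: one obtains the existence of a non-vanishing vector field on $M$ via the Euler characteristic, but not an explicit formula in terms of $\xi$, and indeed $\xi$ need not lift to a vector field on $M$ at all (it does so precisely when $\widetilde M\to M$ is trivial).
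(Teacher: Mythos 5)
Your argument is exactly the paper's: the canonical vector field $\widetilde\xi$ on the associated double cover is non-vanishing, so $\chi(\widetilde M)=0$, multiplicativity of $\chi$ in double covers gives $\chi(M)=0$, and Proposition~\ref{prop:PH2} then supplies a non-vanishing vector field on $M$. The only point you omit is the case where $M$ is not closed: you assume $M$ closed at the outset, whereas the theorem is stated for arbitrary manifolds, and the paper disposes of the non-compact and non-empty-boundary cases first by the standard fact that such manifolds always admit a non-vanishing vector field (note that Proposition~\ref{prop:PH2} and the Euler-characteristic argument are only available for closed $M$).
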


\begin{proof}
It is well-known that any manifold which is non-compact or has non-empty boundary admits a non-vanishing vector field, and hence a line field. We may therefore assume $M$ to be closed.

Let $\xi$ be a line field on $M$, let $\pi:\widetilde{M}\to M$ be the associated double cover, 
and let $\wt\xi$ be the canonical vector field on $\wt M$ associated to $\xi$.
Since $\wt\xi$ is non-vanishing,
by the classical Poincar\'e--Hopf Theorem and the multiplicativity of the Euler characteristic in coverings, we have
\[
0 = \chi(\widetilde{M}) = 2 \, \chi(M).
\]
Therefore $\chi(M)=0$, and $M$ admits a non-vanishing vector field by Proposition \ref{prop:PH2}.
\end{proof}

\begin{rem}\label{rem:Oprea}
We offer the following more constructive proof of Theorem \ref{lineiffvec}. Suppose $M$ admits a line field, which we view as a line sub-bundle $\xi\subseteq TM$. Take a section $s:M\to \xi$ which is transverse to the zero section of $\xi$. Assuming $M$ to be compact, the zeroes of $s$ form a finite set $\{x_1,\ldots , x_k\}\subseteq M$. The tangent bundle of $M$ decomposes as $TM\cong \xi\oplus E$ for some $(m-1)$-dimensional bundle $E$. We may modify the zero section of $E$ by adding a bump function on a small neighbourhood of each zero $x_i$ of $s$ (on which the bundle $E$ is trivial), thus obtaining a section $t:M\to E$ which is zero outside of these neighbourhoods. Then setting $v(x) = (s(x),t(x))\in \xi\oplus E\cong TM$ defines a non-vanishing vector field on $M$.
\end{rem}

\begin{rem}
For any given line bundle $\xi$ on $M$, one can ask the more refined question,
``Is there an embedding $\xi \subset TM$?"  This question is answered completely
in \cite[Theorems 2.1]{Koschorke}; see also \cite[Theorem 3.1]{Koschorke}.
\end{rem}

Our main interest is in line fields with finitely many isolated singularities.

\begin{defn}\label{def:linefieldsing}
A \emph{line field on $M$ with singularities} at $x_1,\ldots , x_n\in M$ is a line field on the complement $M\setminus\{x_1,\ldots , x_n\}$.
\end{defn}

For instance, a vector field $v$ on a closed manifold $M$ with isolated zeros at
$x_1,\ldots, x_n$ determines such a line field with singularities (but the converse does not hold). 
We recall that for such a vector field the classical Poincar\'e--Hopf theorem asserts that
\[
\sum_{i=1}^n \ind_v(x_i) = \chi(M).
\]
As mentioned in the Introduction, we are aware of several analogous results for line fields with singularities in the literature, two of which we will now discuss.

The first is due to H.\ Hopf and appears in \cite[pp.107--113]{Hopf}. Let $x$ be an isolated singularity of a line field $\xi$ on a surface $\Sigma$. Hopf associates to such a singularity a half-integer index, which we denote $\hind_\xi(x)\in \frac12\Z$ and call the \emph{Hopf index}, defined as follows. Take a small closed curve $C:[0,1]\to \Sigma$ around $x$, with no other singularities on $C$ or in its interior. Choose a vector lifting of $\xi(C(0))$, which by continuity determines a vector lifting of $\xi(C(t))$ for all $0\le t\le 1$. If $C$ was chosen small enough to be contained in some coordinate chart, we can measure the number of total rotations of this vector as $C$ is traversed, relative to the local coordinates. Hopf shows that this gives a half-integer $\hind_\xi(x)\in\frac12\Z$ which is independent of the various choices involved.

Sketches of line field singularities of various indices appearing in \cite{Hopf} are reproduced in Figure \ref{fig:Hopfindices} (similar figures appeared at around the same time in the physics literature \cite{Frank}). In each case it is the integral curves of the line field which are shown.

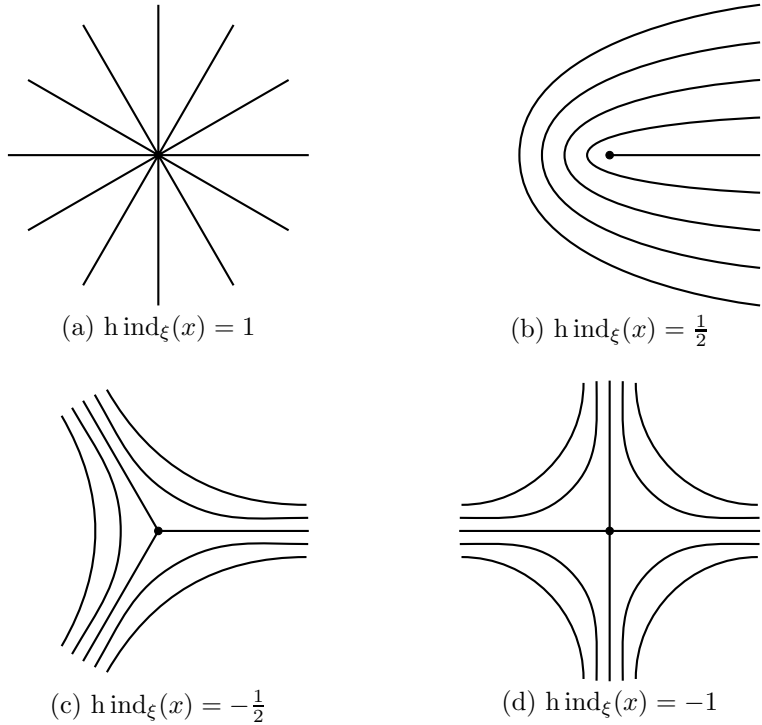
\begin{figure}
\begin{tikzpicture}
\draw[fill] (-3,0) circle[radius=0.05];
\foreach \i in {0,...,11}
\draw[thick](-3,0)--($(-3,0)+({30*\i}:2cm)$);
\node[anchor=north] at(-3,-2){(a) $\hind_\xi(x)=1$};

\draw[fill] (3,0) circle[radius=0.05];
\draw[thick] (3,0)--(5,0);
\foreach \i in {1,...,4}
\draw[thick] plot [smooth,tension=1.5] coordinates {($(5,0)+0.5*(0,\i)$) ($(3,0)-0.3*(\i,0)$)($(5,0)-0.5*(0,\i)$)};
\node[anchor=north] at(3,-2){(b) $\hind_\xi(x)=\frac12$};

\draw[fill] (-3,-5) circle[radius=0.05];
\foreach \i in {1,2,3}
\draw[thick](-3,-5)--($(-3,-5)+(120*\i:2cm)$);
\foreach \i in {1,2,3}
\draw[thick] ($(-3,-5)+(120*\i+5:2cm)$) to [out={120*\i+180},in={120*\i-30}] ($(-3,-5)+(120*\i+60:0.5cm)$) to [out={120*\i+150},in={120*\i-60}] ($(-3,-5)+(120*\i+115:2cm)$);
\foreach \i in {1,2,3}
\draw[thick] ($(-3,-5)+(120*\i+10:2cm)$) to [out={120*\i+180},in={120*\i-60}] ($(-3,-5)+(120*\i+110:2cm)$);
\node[anchor=north] at(-3,-7){(c) $\hind_\xi(x)=-\frac12$};

\draw[fill] (3,-5) circle[radius=0.05];
\foreach \i in {1,...,4}
\draw[thick](3,-5)--($(3,-5)+(90*\i:2cm)$);
\foreach \i in {1,...,4}
\draw[thick] ($(3,-5)+(90*\i+5:2cm)$) to [out={90*\i+180},in={90*\i-45}] ($(3,-5)+(90*\i+45:0.75cm)$) to [out={90*\i+135},in={90*\i-90}] ($(3,-5)+(90*\i+85:2cm)$);
\foreach \i in {1,...,4}
\draw[thick] ($(3,-5)+(90*\i+10:2cm)$) to [out={90*\i+180},in={90*\i-90}] ($(3,-5)+(90*\i+80:2cm)$);
\node[anchor=north] at(3,-7){(d) $\hind_\xi(x)=-1$};

\end{tikzpicture}
\caption{Line field singularities and their Hopf indices}
\label{fig:Hopfindices}
\end{figure}

\begin{thm}[{\cite[p.113]{Hopf}}] For a line field $\xi$ with singularities $x_1,\ldots , x_n$ on a closed orientable surface $\Sigma$,
\[
\sum_{i=1}^n \hind_\xi(x_i) = \chi(\Sigma).
\]
\end{thm}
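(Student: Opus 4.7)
The plan is to mimic the proof of Theorem \ref{lineiffvec}, passing to the associated double cover and applying the classical Poincar\'e--Hopf theorem, but now accounting for branching at those singularities around which the line field fails to be orientable. First I would restrict $\xi$ to $\Sigma \setminus \{x_1, \ldots, x_n\}$, where it is a genuine line field, and take the associated double cover coming from the unit sphere bundle of $\xi$. The monodromy of this cover around a small loop encircling $x_i$ is trivial or nontrivial according as $\hind_\xi(x_i)$ is an integer or a half-integer, the former being precisely the case when the line field admits a consistent vector lifting around $x_i$. Accordingly I would compactify to a branched double cover $\pi \colon \widetilde{\Sigma} \to \Sigma$ of closed surfaces by adjoining two unramified preimages above each integer-index singularity and one ramified preimage above each half-integer-index singularity; write $b$ for the number of ramified points so obtained.

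The next step is to extend the canonical vector field $\widetilde{\xi}$ described just before Theorem \ref{lineiffvec} to a vector field on all of $\widetilde{\Sigma}$ with isolated zeros exactly at the preimages of the $x_i$, and to compute its indices there. Using the prototype line field $\xi(re^{i\theta}) = \langle e^{i\hind_\xi(x_i)\theta}\rangle$ near each $x_i$, there are two cases. When $\hind_\xi(x_i)\in\Z$ the cover is trivial above a neighbourhood of $x_i$ and the extended vector field has a zero of index $\hind_\xi(x_i)$ on each of the two sheets, contributing $2\hind_\xi(x_i)$ above $x_i$. When $\hind_\xi(x_i)\in\tfrac12+\Z$ the cover is locally modelled by $\widetilde{z}\mapsto \widetilde{z}^2$ with $d\pi_{\widetilde{z}}$ equal to multiplication by $2\widetilde{z}$; a direct computation shows that the canonical vector field is proportional to $e^{i(2\hind_\xi(x_i)-1)\widetilde{\theta}}$ on a small loop about the branch point, so $\widetilde{\xi}$ extends there with index $2\hind_\xi(x_i)-1$. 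Summing, the total index of $\widetilde{\xi}$ on $\widetilde{\Sigma}$ equals $2\sum_i\hind_\xi(x_i) - b$.

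To conclude, the classical Poincar\'e--Hopf theorem applied to $\widetilde{\Sigma}$ gives $\chi(\widetilde{\Sigma}) = 2\sum_i \hind_\xi(x_i) - b$, while the Riemann--Hurwitz formula applied to the degree-two branched cover $\pi$ gives $\chi(\widetilde{\Sigma}) = 2\chi(\Sigma) - b$. Equating these two expressions and cancelling $b$ yields the desired identity $\sum_i \hind_\xi(x_i) = \chi(\Sigma)$.

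The hard part, and the point Markus overlooked in the surface case, is the local index computation at a branch point. The identification $T_{\widetilde{z}}\widetilde{\Sigma} \cong T_{\pi(\widetilde{z})}\Sigma$ via $d\pi$ degenerates at $\widetilde{x}_i$, so although the line field pulls back smoothly across the branch point, the winding of the canonical vector field is not simply twice the Hopf index. Instead, the derivative of the cover map introduces a compensating twist of $-1$ per branch point, and it is precisely this $-1$ that produces the $-b$ needed to cancel the Riemann--Hurwitz defect in the final step.
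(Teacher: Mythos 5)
Your proposal is correct, and its global skeleton --- pass to the double cover associated to $\xi$ away from the singularities, compactify to a branched double cover $\pi\colon\widetilde{\Sigma}\to\Sigma$ with one ramified point over each non-orientable singularity, extend the canonical vector field over the added points, and then play the classical Poincar\'e--Hopf theorem off against Riemann--Hurwitz --- is exactly the strategy of the paper's proof of Theorem \ref{maintheorem}, of which the present statement is the case $m=2$ via the identity $\pind_\xi(x)=2\hind_\xi(x)$. Where you genuinely differ is in the index comparison between base and cover, which is the step Markus got wrong. The paper handles this uniformly in all dimensions by introducing normal indices and proving \lemref{lem:indperpind}, \lemref{lem:projindperpprojind} and \lemref{upstairsdownstairs}; unwinding these gives $\sum_{y\in\pi^{-1}(x_i)}\ind_v(y)=\pind^\perp_\xi(x_i)+\#\pi^{-1}(x_i)=2\hind_\xi(x_i)-2+\#\pi^{-1}(x_i)$, i.e.\ $2\hind_\xi(x_i)$ at an orientable singularity and $2\hind_\xi(x_i)-1$ at a non-orientable one --- precisely your numbers. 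You obtain them instead by a direct computation in the local model $\widetilde{z}\mapsto\widetilde{z}^{\,2}$, where transporting the lifted vector $e^{2i\hind_\xi(x_i)\widetilde{\theta}}$ through $d\pi_{\widetilde{z}}=2\widetilde{z}$ contributes the compensating factor $e^{-i\widetilde{\theta}}$ and hence the index $2\hind_\xi(x_i)-1$ at a branch point. Your route is more elementary and more explicit, at the cost of being tied to $m=2$ and to the prototype singularity $\langle e^{i\hind\theta}\rangle$ (which suffices because the index depends only on the homotopy class of $\xi$ on the bounding circle, a point worth saying explicitly); the paper's normal-index formalism is what allows the identical argument to run in all dimensions. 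Note also that the proof the paper actually cites for this particular statement, Hopf's own, is different again: it identifies $2\pi\sum_i\hind_\xi(x_i)$ with the integral of the Gaussian curvature.
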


\begin{rem}
The assumption that $\Sigma$ is orientable is easily removed (for example, by passing to an oriented cover). The argument in \cite{Hopf} proceeds by showing that $2\pi$ times the sum of the indices is the integral of the Gaussian curvature (this gives a proof of the Gauss--Bonnet theorem). It is possible that this argument can be generalized to higher dimensions using the Gauss--Bonnet theorem for Riemannian manifolds due to Allendoerfer--Weil \cite{AW} and Chern \cite{Chern}.
\end{rem}

The second such result is due to L.\ Markus
\cite{Markus}.
To state it requires some preparation. Let $x$ be an isolated singularity of a line field $\xi$ on a closed manifold $M^m$ of arbitrary dimension $m\ge2$. We call the singularity $x$ \emph{orientable} if $\xi$ lifts to a vector field in a neighbourhood of $x$, or equivalently, if the restriction $\pi|_S:\widetilde{S}\to S$ of the associated double cover to a small sphere $S$ centred at $x$ and not containing any other singularities is trivial. Otherwise, we say that $x$ is \emph{non-orientable}. Note that when $m>2$ all singularities are orientable, and when $m=2$ the orientable singularities are those for which the Hopf index $\hind_\xi(x_i)$ is an integer.

Markus defines an integer projective index $\mind_\xi(x)\in\Z$, as follows. Restricting $\xi$ to $S$ produces a section $\xi|_S:S\to PTM|_S$. If $S$ was chosen small enough, we can compose with a trivialisation $PTM|_S\to S\times P^{m-1}$ and then project onto the second coordinate to obtain a map $$f:S^{m-1}\approx S\to P^{m-1}.$$ If $m$ is even, we can orient $S$ and $P^{m-1}$ and define $\mind_\xi(x)$ to be the degree of this map. If $m\ge3$ is odd, then $x$ is orientable and it follows that $f$ lifts through the standard double cover $S^{m-1}\to P^{m-1}$ to a map $\widetilde{f}:S\to S^{m-1}$. Choosing base points determines an element $[\widetilde{f}]\in \pi_{m-1}(S^{m-1})$, which on suspending gives an element in $\pi_m(S^m)$. Composing a representative of this element with the double cover $S^m\to P^m$ gives a map $g:S^m\to P^m$, whose degree is taken to be $\mind_\xi(x)$.
We remark that $\mind_\xi(x)$ is always an {\em even} integer; this is because the double cover $S^m \to P^m$ has
degree two.

 Markus then claims \cite[Theorem 2]{Markus} that if $\xi$ is a line field on $M$ with singularities at $x_1,\ldots , x_n$, precisely $k$ of which are non-orientable, then
\[
\sum_{i=1}^n \mind_\xi(x_i) = 2\, \chi(M) - k.
\]
Unfortunately, there are counter-examples to this result when $m=2$ or $m$ is odd.

\begin{ex}\label{baseball}
This example is known colloquially among soft matter physicists as the ``baseball". It is a line field on $S^2$ with four singularities of Hopf index $\frac12$ (and Markus index $1$). Figure \ref{fig:baseball} illustrates how to produce such a line field by glueing together two copies of a line field on the disk $D^2$ which is parallel to the boundary.

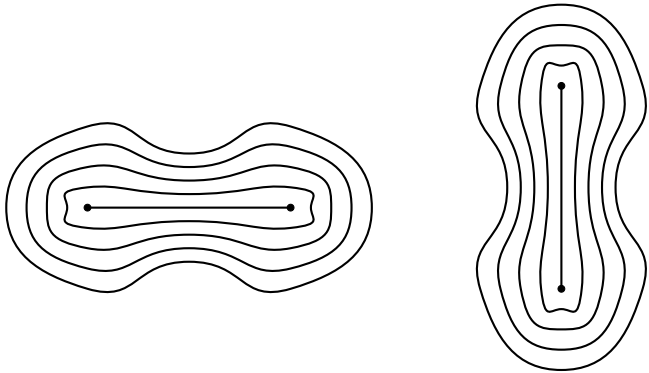
\begin{figure}
\begin{tikzpicture}
\newcommand\halfbaseball{
\draw[fill] (-3,0) circle[radius=0.05];
\draw[fill] (0,0) circle[radius=0.05];
\draw[thick] (-3,0)--(0,0);
\foreach \i in {1,...,4}
\draw[thick] plot [smooth cycle,tension=1] coordinates {(0.3*\i,0) (0,0.3*\i) ($(-1.5,0)+0.2*(0,\i)$) ($(-3,0)+0.3*(0,\i)$) ($(-3,0)-0.3*(\i,0)$)($(-3,0)-0.3*(0,\i)$)($(-1.5,0)-0.2*(0,\i)$) (0,-0.3*\i)};
}
\begin{scope}[scale=0.9]
\halfbaseball
\end{scope}
\begin{scope}[scale=0.9,xshift=4cm,yshift=1.8cm,rotate=90]
\halfbaseball
\end{scope}
\end{tikzpicture}
\caption{The baseball line field on $S^2$ is obtained by glueing these two disks along their boundaries.}
\label{fig:baseball}
\end{figure}

In this example, all four singularities are non-orientable, and the sum of their Markus indices is $4$. This contradicts Markus' result, since
\[
4 \neq 2\,\chi(S^2) - 4  = 0.
\]
\end{ex}

\begin{ex}\label{RP3}
We may construct a line field on real projective space $P^m$ with a single singularity of Markus index $2$. Consider the line field on the standard disk $D^m\subset\R^m$ with a single singularity, whose integral curves are the rays in $\R^m$ emanating from the origin (the case $m=2$ is as shown in Figure \ref{fig:Hopfindices}(a)). Regarding $P^m$ as obtained from $D^m$ by identifying antipodal boundary points, we obtain a line field on $P^m$ with a single singularity of Markus index $2$. When $m\ge3$ is odd this gives a counter-example to Markus' result, since then $\chi(P^m)=0$.
\end{ex}

Theorem \ref{maintheorem} in the introduction fixes the statement and proof given by Markus \cite{Markus}, thereby extending the Theorem of Hopf \cite{Hopf} to all dimensions.

\section{Indices and projective indices}

In this section we recall the classical definition of the index of an isolated zero of a vector field, and give our definition of the projective index of an isolated singularity of a line field. We then define alternative indices, called \emph{normal indices}, which are more suited to proving Theorem \ref{maintheorem}, and describe how they relate to the classical indices.

Let $M$ be a closed smooth manifold of dimension $m\ge2$, and let $v:M\to TM$ be a vector field on $M$ with an isolated zero at $x\in M$. Let $D$ be a small coordinate disk centred at $x$ not containing any other zeros of $v$, and let $S\approx S^{m-1}$ be its boundary. Restricting $v$ to $S$ and normalizing results in an embedding $v:S\to STM|_S$. On the other hand, a trivialisation of the sphere tangent bundle over $S$ gives a diffeomorphism $\Phi: STM|_S\to S\times S^{m-1}$.
\begin{defn}[Cf.~{\cite[\S 6]{Milnor}}]
\label{def:degree}
The index of $v$ at $x$, denoted $\ind_v(x) \in \Z$, is the Brouwer degree of the composition
\[
\xymatrix{
f: S \ar[r]^-{v} & STM|_S \ar[r]^-{\Phi} & S\times S^{m-1} \ar[r]^-{\pi_2} & S^{m-1},
}
\]
where $\pi_2$ denotes projection onto the second factor.
\end{defn}

For our later arguments, it is important to identify the index of $v$ at $x$
as the intersection number of certain sections of $TM$ restricted to $S$.
Observe that any point $a\in S^{m-1}$ determines a section $\sigma=\sigma_a:S\to STM|_S$ defined by $\sigma(z)=\Phi^{-1}(z,a)$. For generic $a$ the embeddings $\sigma$ and $v$ will intersect transversely. Choosing a local orientation at $x$ results in an orientation of the base sphere $S$ and the fibre sphere $S^{m-1}$, and hence a product orientation of $S\times S^{m-1}$ and therefore (using $\Phi$) an orientation of $STM|_S$. It follows that there is a
well-defined intersection number
$$\sigma(S)\pitchfork v(S)\in \Z.$$
%
Using the definition of degree from differential topology (in terms of the signs of the differential at pre-images of a regular value $a\in S^{m-1}$), it is not difficult to check
that this intersection number gives the index of $v$ at $x$.  Hence we have

\begin{lem}\label{lem:index}
The index of $v$ at $x$ satisfies $\ind_v(x) = \sigma(S)\pitchfork v(S)$.
 \end{lem}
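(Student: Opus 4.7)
The plan is to unwind the definition of Brouwer degree in terms of signed preimages of a regular value and match this against the signed count of transverse intersections.

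First I would apply Sard's theorem to pick a point $a \in S^{m-1}$ that is simultaneously a regular value of $f$ and for which $\sigma = \sigma_a$ meets $v$ transversely in $STM|_S$. Since $STM|_S$ has dimension $2(m-1)$ and both $\sigma(S)$ and $v(S)$ are $(m-1)$-dimensional embedded submanifolds, transversality forces their intersection to be a finite set of points.

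Next I would identify the intersection set with $f^{-1}(a)$. A point $p \in \sigma(S) \cap v(S)$ corresponds to some $z \in S$ with $\sigma(z) = v(z)$, i.e.\ $\Phi^{-1}(z,a) = v(z)$, which by the construction of $f$ is equivalent to $f(z) = a$. So the two finite sets of contributors are in natural bijection.

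The remaining step is to match signs. At an intersection point $p$ lying over $z \in f^{-1}(a)$, use $\Phi$ to identify a neighbourhood of $p$ in $STM|_S$ with a neighbourhood of $(z,a)$ in $S \times S^{m-1}$; then $\sigma(S)$ becomes the graph of the constant map $z \mapsto a$ while $v(S)$ becomes the graph of $f$. Stacking oriented bases of $T_p \sigma(S)$ followed by $T_p v(S)$ against the product orientation of $T_z S \oplus T_a S^{m-1}$ produces the block matrix
\[
\begin{pmatrix} I & I \\ 0 & A \end{pmatrix},
\]
where $A$ represents $df_z$ in the chosen bases; its determinant is $\det(df_z)$, so the local intersection sign equals $\mathrm{sign}(\det df_z)$. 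This is precisely the sign with which $z$ contributes to the Brouwer degree of $f$ at $a$, and summing over $z \in f^{-1}(a)$ yields the identity $\ind_v(x) = \sigma(S) \pitchfork v(S)$.

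The only real subtlety, rather than a genuine obstacle, is orientation bookkeeping: one must verify that the product orientation on $S \times S^{m-1}$ (base first, then fibre) used to orient $STM|_S$ via $\Phi$ matches the convention implicit in the intersection number, so that no global sign is lost when comparing the two computations.
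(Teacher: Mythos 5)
Your argument is correct and is precisely the check the paper leaves to the reader: the paper's own justification is the one-line remark that, using the differential-topology definition of degree via signs of the differential at preimages of a regular value, the intersection number $\sigma(S)\pitchfork v(S)$ is seen to equal $\ind_v(x)$. Your identification of $\sigma(S)\cap v(S)$ with $f^{-1}(a)$ as graphs in $S\times S^{m-1}$ and the block-matrix sign computation fill in that same argument in detail, so the two proofs coincide in approach.
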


Fixing a Riemannian metric on $M$ determines an outward unit normal of the codimension one embedding $S\hookrightarrow M$, which we regard as an embedding $\eta: S\to STM|_S$.

 \begin{defn}\label{def:perpindex}
 The \emph{normal index} of the vector field $v$ at the zero $x$, denoted $\ind^\perp_v(x)$, is defined to be the oriented intersection number $\eta(S)\pitchfork v(S)\in \Z$, where $\eta$ is the outward unit normal to the sphere $S\hookrightarrow M$.
 \end{defn}

\begin{lem}\label{lem:indperpind}
We have
\[
\ind^\perp_v(x) = \ind_v(x) + (-1)^{m-1}.
\]
\end{lem}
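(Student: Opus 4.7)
The plan is to express both indices as intersection numbers of homology classes in $STM|_S \cong S \times S^{m-1}$ and to extract the difference from the graded commutativity of the intersection pairing on middle-dimensional classes.

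First, I would recast Lemma~\ref{lem:index} homologically. Using the trivialisation $\Phi$ to identify $STM|_S$ with $S\times S^{m-1}$, Künneth gives
\[
H_{m-1}(S \times S^{m-1}) = \Z\alpha \oplus \Z\beta,
\]
where $\alpha = [S \times \{*\}]$ and $\beta = [\{*\} \times S^{m-1}]$. The constant section $\sigma$ satisfies $[\sigma(S)] = \alpha$ by construction, and since the map $\widetilde{v} := \pi_2 \circ \Phi \circ v : S \to S^{m-1}$ has degree $\ind_v(x)$, standard graph-class calculus gives $[v(S)] = \alpha + \ind_v(x)\,\beta$. (Indeed, Lemma~\ref{lem:index} is the assertion that $\alpha \cdot [v(S)] = \ind_v(x)$.)

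Next I would compute $[\eta(S)]$. In the coordinates on $D$ used to build $\Phi$, the outward unit normal $\eta$ becomes the section $z \mapsto (z, z/|z|)$, whose second component is the standard orientation-preserving identification $S \approx S^{m-1}$ of degree $+1$. Hence $[\eta(S)] = \alpha + \beta$. Subtracting,
\[
\ind^\perp_v(x) - \ind_v(x) = \bigl([\eta(S)] - [\sigma(S)]\bigr) \cdot [v(S)] = \beta \cdot \bigl(\alpha + \ind_v(x)\,\beta\bigr).
\]
Both $\alpha$ and $\beta$ are represented by submanifolds with trivial normal bundle in the product, so $\alpha \cdot \alpha = 0 = \beta \cdot \beta$. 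A single-point transverse intersection with the product orientation gives $\alpha \cdot \beta = +1$, and graded commutativity of the intersection pairing on $(m-1)$-dimensional classes yields $\beta \cdot \alpha = (-1)^{(m-1)^2}(\alpha \cdot \beta) = (-1)^{m-1}$. Combining, $\ind^\perp_v(x) - \ind_v(x) = (-1)^{m-1}$, which is the claimed identity.

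The main obstacle is orientation bookkeeping: one must check that the local coordinates on $D$, the induced boundary orientation of $S$, the orientation of the fibre $S^{m-1}$ coming from the chosen local orientation at $x$, and the trivialisation $\Phi$ combine consistently to give the product orientation on $S \times S^{m-1}$ used above. Once these conventions are aligned, the elusive sign $(-1)^{m-1}$ arises entirely from the non-commutativity of the intersection pairing on classes of middle dimension, which is precisely the subtlety that disappears when $m-1$ is even and flips sign when $m-1$ is odd.
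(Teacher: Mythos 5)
Your proposal is correct and follows essentially the same route as the paper: both compute $\ind^\perp_v(x)$ and $\ind_v(x)$ as intersection numbers in $S\times S^{m-1}$, identify $[\eta(S)]-[\sigma(S)]$ with the fibre class, and extract the sign $(-1)^{m-1}$ from the fibre--base intersection (the paper phrases this via cup products of Poincar\'e duals rather than directly via graded commutativity of the intersection product, but the content is identical). No gaps.
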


\begin{proof}
We proceed homologically, calculating intersection numbers in the product $S\times S^{m-1}$ (compare Bredon \cite[VI, Example 11.12]{Bredon}). Let $[S]\in H_{m-1}(S)$ and $[S^{m-1}]\in H_{m-1}(S^{m-1})$ be the fundamental classes, so that $[S]\times [S^{m-1}]\in H_{2m-2}(S\times S^{m-1})$ is the fundamental class of the product. Denote the (Kronecker) dual cohomology classes by $\alpha\in H^{m-1}(S)$ and $\beta\in H^{m-1}(S^{m-1})$. In the cohomology of the product
\[
H^*(S\times S^{m-1}) \cong H^*(S)\otimes H^*(S^{m-1}),
\]
the Poincar\'e dual of the homology class $\Phi_*\sigma_*[S]=[S]\times 1 \in H_{m-1}(S\times S^{m-1})$ is the class $(-1)^{m-1}\times\beta$, and the Poincar\'e dual of $\Phi_*\eta_*[S]=1\times[S^{m-1}]+[S]\times 1$ is the class $(\alpha\times 1 + (-1)^{m-1} \times \beta)$. Let $\gamma\in H^{m-1}(S\times S^{m-1})$ be the Poincar\'e dual of $\Phi_*v_*[S]$.
Applying
Lemma \ref{lem:index} and Definition \ref{def:perpindex}
we therefore find that

\begin{align*}
\ind_v^\perp(x) & = \eta(S)\pitchfork v(S) \\
                & = \langle(\alpha\times 1 + (-1)^{m-1} \times \beta)\cup\gamma,[S]\times [S^{m-1}]\rangle\\
                & = \langle(\alpha\times 1)\cup\gamma,[S]\times[S^{m-1}]\rangle +  \langle((-1)^{m-1}\times\beta)\cup\gamma, [S]\times [S^{m-1}]\rangle\\
                & = \langle(\alpha\times 1)\cup\gamma,[S]\times[S^{m-1}]\rangle + \sigma(S)\pitchfork v(S)\\
                & = (-1)^{m-1} + \ind_v(x) ,
\end{align*}
as was to be shown. (The first term is evaluated by noticing that it agrees with the oriented intersection number of the fibre with the base, in that order.)
\end{proof}

We now define the projective index of an isolated singularity of a line field.
Let $\xi$ be a line field on $M$ with finitely many isolated singularities, that is, a section $\xi:M\setminus\{x_1,\ldots , x_n\}\to PTM|_{M\setminus\{x_1,\ldots , x_n\}}$ of the restriction of the projectivized tangent bundle of $M$ to the complement of a finite set of points. Let $x\in M$ be a singularity of $\xi$. Restricting to a small sphere $S$ containing $x$ as before gives a section $\xi:S\to PTM|_S$. As before we have a trivialization $\Phi:PTM|_S\to S\times P^{m-1}$ and therefore any point $a\in P^{m-1}$ determines a section $\sigma=\sigma_a:S\to PTM|_S$ defined by $\sigma(z)=\Phi^{-1}(z,a)$. When $m$ is even, a local orientation at $x$ determines orientations of $S$, the fibre $P^{m-1}$, the product $S\times P^{m-1}$ and $PTM|_S$.

\begin{defn}\label{def:projindex}
The \emph{projective index} of the line field $\xi$ at the singularity $x$, denoted $\pind_\xi(x)$, is defined to be the oriented intersection number $\sigma(S)\pitchfork \xi(S)\in \Z$ when $m$ is even, and the mod $2$ intersection number $\sigma(S)\pitchfork_2 \xi(S)\in \Z/2$ when $m$ is odd.
 \end{defn}

\begin{rem}
The projective index defined above
agrees with Markus' index \cite{Markus} in the case $m$ even, and is twice Hopf's index \cite[p. 107]{Hopf} in the case $m=2$.
The Markus index for $m\ge3$ odd seems not to be well-defined, due to there being two possible choices for the lift $\wt{f}:S\to S^{m-1}$ which differ by a map of degree $(-1)^{m}=-1$.
In the odd-dimensional case it is possible to define a projective index taking values in the non-negative integers (compare Mermin \cite[p.\,631]{Mermin} or Alexander \emph{et al} \cite[p.\,506]{Alexander}), but this will not be used here.
\end{rem}

Fixing a Riemannian metric on $M$ determines a normal line to the codimension one embedding $S\hookrightarrow M$, which we regard as an embedding $\eta: S\to PTM|_S$.

\begin{defn}\label{def:perpprojindex}
 The \emph{normal projective index} of the line field $\xi$ at the singularity $x$, denoted $\pind^\perp_\xi(x)$, is defined to be the oriented intersection number $\eta(S)\pitchfork \xi(S)\in \Z$ when $m$ is even, and the mod $2$ intersection number $\eta(S)\pitchfork_2 \xi(S)$ when $m$ is odd.
 \end{defn}

 \begin{lem}\label{lem:projindperpprojind}
 When $m$ is even, we have
\[
\pind^\perp_\xi(x) = \pind_\xi(x) -2.
\]
\end{lem}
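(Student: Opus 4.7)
The plan is to mimic the cohomological computation used in the proof of \lemref{lem:indperpind}, working this time in the product $S \times P^{m-1}$ instead of $S \times S^{m-1}$. Since $m$ is even, $m-1$ is odd, so $P^{m-1}$ is orientable; this is exactly the condition that makes both $S \times P^{m-1}$ orientable and the integer intersection numbers in \defref{def:perpprojindex} well-defined, and it lets us run Poincar\'e duality formally as before.

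First I would identify the homology classes $\Phi_*\sigma_*[S]$ and $\Phi_*\eta_*[S]$ in $H_{m-1}(S \times P^{m-1})$. By K\"unneth this group is $H_{m-1}(S) \otimes H_0(P^{m-1}) \oplus H_0(S) \otimes H_{m-1}(P^{m-1})$, so both classes are linear combinations of $[S]\times 1$ and $1\times[P^{m-1}]$. Since the first-factor projection of both $\sigma$ and $\eta$ is the identity on $S$, while the second-factor projection of $\sigma$ is constant, we obtain $\Phi_*\sigma_*[S]=[S]\times 1$. For $\eta$, after choosing a geodesic exponential chart at $x$ in which $\eta(z)$ is the line through $z$, the second-factor projection becomes the antipodal quotient $S^{m-1}\to P^{m-1}$, which (since $m-1$ is odd) is a double cover of degree $\pm 2$. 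With orientations consistently chosen this gives
\[
\Phi_*\eta_*[S] = [S]\times 1 + 2\bigl(1\times[P^{m-1}]\bigr).
\]

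Next, letting $\alpha\in H^{m-1}(S)$ and $\beta\in H^{m-1}(P^{m-1})$ be the Kronecker duals of the fundamental classes, the same K\"unneth calculation as in \lemref{lem:indperpind} identifies the Poincar\'e dual of $\sigma_*[S]$ with $(-1)^{m-1}\times\beta$ and the Poincar\'e dual of $\eta_*[S]$ with $2(\alpha\times 1)+(-1)^{m-1}\times\beta$. Setting $\gamma\in H^{m-1}(S\times P^{m-1})$ to be the Poincar\'e dual of $\Phi_*\xi_*[S]$, subtracting the resulting expressions for $\pind_\xi(x)$ and $\pind^\perp_\xi(x)$ gives
\[
\pind^\perp_\xi(x) - \pind_\xi(x) = 2\,\bigl\langle (\alpha\times 1)\cup\gamma,\;[S]\times[P^{m-1}]\bigr\rangle.
\]
As in the proof of \lemref{lem:indperpind}, the remaining Kronecker pairing equals the oriented intersection number of a fibre with the graph of $\xi$, in that order, which is $(-1)^{m-1}$. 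For $m$ even this is $-1$, and we conclude $\pind^\perp_\xi(x)-\pind_\xi(x)=-2$.

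The main obstacle is pinning down the degree-$2$ computation for $\Phi_*\eta_*[S]$ with the correct sign. I would handle this by insisting the trivialisation $\Phi$ be compatible with an exponential chart at $x$, so that $\pi_2\circ\eta$ is literally the antipodal quotient $S^{m-1}\to P^{m-1}$; once that identification is in place, every remaining sign is inherited verbatim from the vector-field argument of \lemref{lem:indperpind}.
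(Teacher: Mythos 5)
Your proposal is correct and follows essentially the same route as the paper's proof: a K\"unneth/Poincar\'e-duality computation in $S\times P^{m-1}$, with the key input being that $\Phi_*\eta_*[S]=[S]\times 1+2(1\times[P^{m-1}])$ because $\pi_2\circ\eta$ is the degree-$\pm2$ antipodal quotient, and the leftover term $2\langle(\alpha\times 1)\cup\gamma,[S]\times[P^{m-1}]\rangle$ evaluating to $-2$ for $m$ even. The paper asserts the class of $\eta_*[S]$ without the exponential-chart justification you supply, but the argument and all signs agree.
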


\begin{proof}
As above we proceed homologically, computing intersection numbers in the product $S\times P^{m-1}$. Let $[S]\in H_{m-1}(S)$ and $[P^{m-1}]\in H_{m-1}(P^{m-1})$ denote the fundamental classes, and let $\alpha\in H^{m-1}(S)$ and $\beta\in H^{m-1}(P^{m-1})$ denote their Kronecker duals. In the cohomology of the product
\[
H^*(S\times P^{m-1}) \cong H^*(S)\otimes H^*(P^{m-1}),
\]
the Poincar\'e dual of the homology class $\Phi_*\sigma_*[S]=[S]\times 1 \in H_{m-1}(S\times P^{m-1})$ is the class $-1\times\beta$, and the Poincar\'e dual of $\Phi_*\eta_*[S]=2\times[P^{m-1}]+[S]\times 1$ is the class $(\alpha\times 2 - 1 \times \beta)$. Let $\gamma\in H^{m-1}(S\times P^{m-1})$ be the Poincar\'e dual of $\Phi_*\xi_*[S]$. Using Definitions \ref{def:projindex} and \ref{def:perpprojindex} we therefore have

\begin{align*}
\pind^\perp_\xi(x) & = \eta(S)\pitchfork \xi(S) \\
                   & = \langle (\alpha\times 2 - 1 \times \beta)\cup\gamma, [S]\times [P^{m-1}]\rangle \\
                   & = \langle (\alpha\times 2)\cup\gamma, [S]\times [P^{m-1}]\rangle + \langle (-1 \times \beta)\cup\gamma, [S]\times [P^{m-1}]\rangle \\
                   & = 2\langle (\alpha\times 1)\cup\gamma, [S]\times [P^{m-1}]\rangle + \sigma(S)\pitchfork \xi(S) \\
                   & = -2 + \pind_\xi(x),
\end{align*}
as required. (Here the first term is twice the intersection number of the fibre and the base, which is $-2$ if $m$ is even.)
\end{proof}

\begin{lem}\label{lem:oddpindzero}
When $m\ge3$ is odd, we have
\[
\pind_\xi(x)= \pind_\xi^\perp(x)=0\in\Z/2.
\]
\end{lem}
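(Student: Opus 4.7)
The plan is to mirror the homological calculation used in the proof of Lemma \ref{lem:projindperpprojind}, but now with $\Z/2$ coefficients, and to exploit the simple observation that when $m-1\ge 2$ every continuous map $S^{m-1}\to P^{m-1}$ has vanishing mod $2$ degree.

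First I would record the relevant Künneth data. The mod $2$ fundamental classes $[S]_2\in H_{m-1}(S;\Z/2)$ and $[P^{m-1}]_2\in H_{m-1}(P^{m-1};\Z/2)$ exist (the latter because $P^{m-1}$ is a closed manifold, orientability notwithstanding), with Kronecker duals $\alpha$ and $\beta$. In $H_{m-1}(S\times P^{m-1};\Z/2)$ only the end terms $[S]_2\times 1$ and $1\times[P^{m-1}]_2$ appear, with Poincar\'e duals $1\times\beta$ and $\alpha\times 1$ respectively.

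The main step is the following claim: if $g\colon S^{m-1}\to P^{m-1}$ is any continuous map and $m-1\ge 2$, then $g_*[S]_2=0$. Since $S^{m-1}$ is simply connected, $g$ lifts along the universal double cover $\pi\colon S^{m-1}\to P^{m-1}$ to a map $\tilde g\colon S^{m-1}\to S^{m-1}$, whence $g_*[S]_2=\pi_*\tilde g_*[S]_2$ lies in the image of $\pi_*$; but $\pi_*[S^{m-1}]_2 = 2\,[P^{m-1}]_2 = 0$ because $\pi$ is a $2$-fold covering. I would apply this both to $f=\mathrm{pr}_2\circ\Phi\circ\xi$ and to the normal direction map $\eta_P=\mathrm{pr}_2\circ\Phi\circ\eta$. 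Expanding the graph maps $\Phi\circ\xi=(\mathrm{id},f)$ and $\Phi\circ\eta=(\mathrm{id},\eta_P)$ via the diagonal, one obtains
\[
\Phi_*\xi_*[S]_2 \;=\; [S]_2\times 1 + 1\times f_*[S]_2 \;=\; [S]_2\times 1,
\]
and analogously $\Phi_*\eta_*[S]_2=[S]_2\times 1=\Phi_*\sigma_*[S]_2$.

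Finally, I would compute the intersection numbers exactly as in Lemma \ref{lem:projindperpprojind}. Writing $\gamma\in H^{m-1}(S\times P^{m-1};\Z/2)$ for the Poincar\'e dual of $\Phi_*\xi_*[S]_2$, the previous step gives $\gamma=1\times\beta$. Both $\pind_\xi(x)=\sigma(S)\pitchfork_2\xi(S)$ and $\pind^\perp_\xi(x)=\eta(S)\pitchfork_2\xi(S)$ therefore reduce to evaluating $(1\times\beta)\cup(1\times\beta)=1\times\beta^2$ on $[S]_2\times[P^{m-1}]_2$, which is zero since $\beta^2\in H^{2(m-1)}(P^{m-1};\Z/2)=0$ for $m\ge 2$. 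Thus $\pind_\xi(x)=\pind^\perp_\xi(x)=0\in\Z/2$. The only genuinely new ingredient beyond the pattern of Lemma \ref{lem:projindperpprojind} is the lifting argument of the middle paragraph; everything else is formal bookkeeping.
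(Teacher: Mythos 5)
Your proof is correct and rests on the same key observation as the paper's: for $m\ge 3$ the sphere $S$ is simply connected, so the relevant map $S\to P^{m-1}$ lifts through the double cover $S^{m-1}\to P^{m-1}$ and its mod $2$ degree (equivalently, its effect on the mod $2$ fundamental class) vanishes. The paper reaches the conclusion via the degree reformulation of the index plus the one-line remark that $\sigma$ and $\eta$ are mod $2$ homologous, whereas you carry out the K\"unneth bookkeeping explicitly, but the argument is essentially identical.
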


\begin{proof}
Similarly to Definition \ref{def:degree} and Lemma \ref{lem:index}, we may regard the projective index as the mod $2$ degree of the composition
\[
\xymatrix{
 f: S \ar[r]^-{\xi} & PTM|_S \ar[r]^-{\Phi} & S\times P^{m-1} \ar[r]^-{\pi_2} & P^{m-1}.
 }
 \]
 However, using covering space theory (and since $m\ge 3$) it is easy to see that this map $f:S\to P^{m-1}$ factors through the canonical double cover $S^{m-1}\to P^{m-1}$, and therefore its mod $2$ degree is zero.

 Equality of the projective index and the normal projective index is clear, since the sections $\sigma$ and $\eta$ represent the same mod $2$ homology class.
\end{proof}

\section{Proof of Theorem \ref{maintheorem}}\label{sec:proof}

We will assume for the proof that $M^m$ is closed. The case when $\partial M \neq \varnothing$ then follows by an obvious argument involving the double of $M$.

 Let $\xi$ be a line field on $M$ with isolated singularities at $x_1,\ldots , x_n$. For each $i=1,\ldots , n$, let $D_i$ be a coordinate disk centred at $x_i$ which does not contain any other singularities and let $S_i=\partial D_i$ be its boundary. Removing the interiors of all the $D_i$ results in a compact manifold $N$ with boundary
\[
\partial N = \bigsqcup_{i=1}^n S_i \approx \bigsqcup_{i=1}^n S^{m-1}.
\]
 The restriction $\xi|_N$ is a global line field on $N$. Let $p:\widetilde{N}\to N$ denote the double cover associated to $\xi|_N$. The restriction of $p$ to each boundary component is a double cover $p_i:\widetilde{S}_i\coloneqq p^{-1}(S_i)\to S_i$ of an $(m-1)$-sphere, which is trivial if and only if $x_i$ is orientable.

 By glueing in disks along the boundary components of $\widetilde{N}$, we obtain a closed manifold $\widetilde{M}$ with a double cover $\pi:\widetilde{M}\to M$ extending $p$. This covering may be branched in the case $m=2$, with branch points of ramification index $2$ above the non-orientable singularities.
 The restriction $\xi|_N$ of the line field to $N$ lifts canonically to a vector field $\widetilde{\xi|_N}$ on $\widetilde{N}$,
 and this extends (by scaling radially on each disk) to a vector field $v$ on $\widetilde{M}$ with isolated zeroes. At each singularity $x_i$ of $\xi$, the pre-image $\pi^{-1}(x_i)$ consists of one or two zeroes of $v$, according as $x_i$ is a non-orientable or an orientable singularity.

 We are going to apply the classical Poincar\'e--Hopf theorem to the vector field $v$ on $\widetilde{M}$ and the Riemann--Hurwitz formula to deduce our main Theorem. For this we need to relate the projective indices at the singularities on $M$ to the vector field indices at the covering zeroes.

 \begin{lem}\label{upstairsdownstairs}
 When $m$ is even, for each singularity $x$ of the line field $\xi$ we have
 \[
 \pind^\perp_\xi(x) = \sum_{y\in \pi^{-1}(x)} \ind^\perp_v(y).
 \]
 \end{lem}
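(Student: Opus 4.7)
The plan is to interpret both sides of the equality as transverse intersection numbers of sections in the appropriate sphere or projective bundles, and to match them via the natural double cover of sphere bundles. The canonical identification $T\widetilde{M}|_{\widetilde{N}} = \pi^* TM|_N$ (valid on the unbranched part of $\widetilde{M}$) induces a fibrewise-isomorphism double cover $\hat\pi \colon ST\widetilde{M}|_{\widetilde{S}} \to STM|_S$ lifting $p \colon \widetilde{S} \to S$. Composing with the standard double cover $\pi_p \colon STM|_S \to PTM|_S$, the upstairs data is transformed: $v$ and $\tilde\eta$ correspond, respectively, to subsets of $STM|_S$ lying over $\xi(S)$ and $\eta(S)$ in $PTM|_S$.

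The strategy has two steps. First, I will show that
\[
\sum_{y \in \pi^{-1}(x)} \ind^\perp_v(y) = \eta(S) \pitchfork \pi_p^{-1}(\xi(S)) \quad \text{in } STM|_S,
\]
where here $\eta \colon S \to STM|_S$ denotes the outward-normal section (rather than its projection in $PTM|_S$). When $x$ is orientable, $\widetilde{S}$ splits as $\widetilde{S}_{y_1} \sqcup \widetilde{S}_{y_2}$ with each component mapping diffeomorphically to $S$ under $\hat\pi$; under this identification $v(\widetilde{S}_{y_i})$ becomes one of the two lifts of $\xi(S)$ while $\tilde\eta(\widetilde{S}_{y_i})$ becomes $\eta(S)$, and summing the intersection numbers on each component gives the desired identity. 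When $x$ is non-orientable (forcing $m=2$), the cover $\widetilde{S} \to S$ is connected, $v(\widetilde{S})$ maps diffeomorphically onto the connected $\pi_p^{-1}(\xi(S))$, and $\tilde\eta(\widetilde{S})$ maps $2$-to-$1$ onto $\eta(S)$; a direct local check confirms that each downstairs transverse intersection lifts to exactly one upstairs intersection with preserved sign.

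Second, I will invoke a general principle: for a finite cover $q \colon X \to Y$ with a section $s \colon B \to X$ over a submanifold $B \subset Y$ and a transverse submanifold $A \subset Y$, we have $s(B) \pitchfork q^{-1}(A) = A \pitchfork B$ in $Y$, because $s$ selects a unique upstairs lift of each downstairs intersection and $q$ is a local diffeomorphism. Applied with $q = \pi_p$, $B = \eta(S) \subset PTM|_S$, $s$ the outward-normal lift, and $A = \xi(S)$, this yields $\eta(S) \pitchfork \pi_p^{-1}(\xi(S)) = \eta(S) \pitchfork \xi(S) = \pind^\perp_\xi(x)$. The main subtlety, and the reason for introducing the normal indices in Section 3, is precisely the non-orientable case: the outward-normal lift singles out exactly one of the two antipodal fibre lifts at each transverse intersection, avoiding a spurious factor of two that would arise with a more symmetric choice of auxiliary section.
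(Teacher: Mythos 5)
Your proof is correct, but it takes a genuinely different route from the paper's. The paper works cohomologically: it forms the $4$-fold cover $\overline{\pi}\colon ST\widetilde{M}|_{\widetilde{S}}\to PTM|_S$ in one step, uses the push--pull (projection) formula on a pullback square to show $\overline{\pi}^*\eta_!(1)=2\widetilde{\eta}_!(1)$ and $\overline{\pi}^*\xi_!(1)=2v_!(1)$ (using $(-1)^m=1$ so that the two normal lifts represent the same class), and then evaluates cup products against $\overline{\pi}_*[ST\widetilde{M}|_{\widetilde{S}}]=4[PTM|_S]$ to get $4\,\pind^\perp_\xi(x)=4\sum_y\ind^\perp_v(y)$ with no case analysis. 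You instead factor that $4$-fold cover through the intermediate sphere bundle $STM|_S$ and match transverse intersection points directly: your Step 1 compares $ST\widetilde{M}|_{\widetilde{S}}$ with $STM|_S$ (where the orientable/non-orientable dichotomy must be treated separately, and where one should also note that in the branched case $m=2$ the covering $\widetilde S\to S$ is still an orientation-compatible local diffeomorphism so that signs transport correctly), and your Step 2 descends from $STM|_S$ to $PTM|_S$ via the correct observation that a section of a covering over $\eta(S)$ selects exactly one preimage of each intersection point. What your approach buys is elementariness and transparency --- it makes visible exactly why the \emph{normal} index is the right invariant (the outward normal singles out one of the two fibre lifts, which is the subtlety Markus missed); what it costs is the case split and the orientation bookkeeping that the paper's formal computation with $f_!$ and fundamental classes absorbs automatically. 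Either argument is a legitimate proof of the Lemma.
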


 \begin{proof}

 This is intuitively clear: the number of times the line field is normal to the sphere $S$ equals the number of times the associated vector field on the double cover agrees with the outward normal on $\widetilde{S}$. See Figure \ref{fig:cover}.

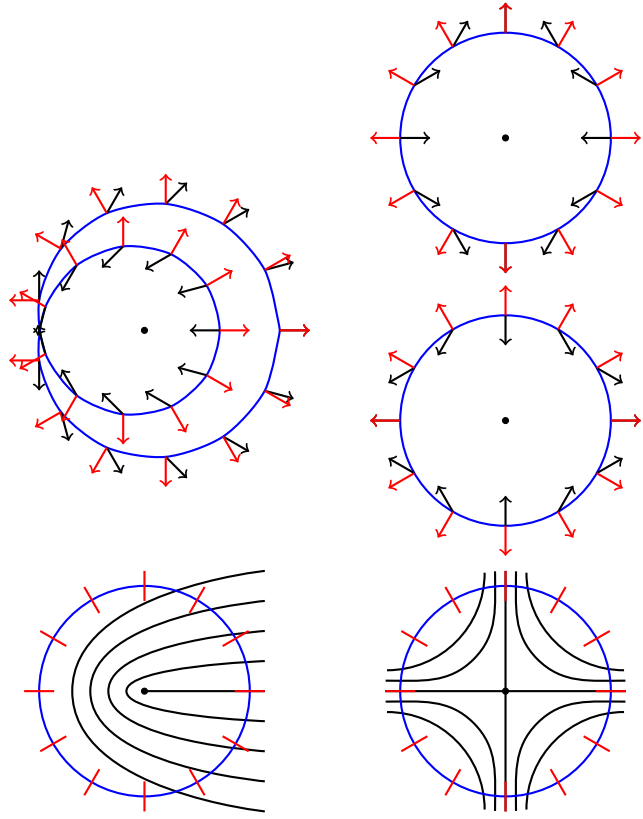
\begin{figure}
\begin{tikzpicture}[scale=0.8]

\begin{scope}[xshift=-6cm,yshift=-5cm]
\draw[fill] (3,0) circle[radius=0.05];
\draw[thick] (3,0)--(5,0);
\foreach \i in {1,...,4}
\draw[thick] plot [smooth,tension=1.5] coordinates {($(5,0)+0.5*(0,\i)$) ($(3,0)-0.3*(\i,0)$)($(5,0)-0.5*(0,\i)$)};
\end{scope}

\begin{scope}
\draw[fill] (3,-5) circle[radius=0.05];
\foreach \i in {1,...,4}
\draw[thick](3,-5)--($(3,-5)+(90*\i:2cm)$);
\foreach \i in {1,...,4}
\draw[thick] ($(3,-5)+(90*\i+5:2cm)$) to [out={90*\i+180},in={90*\i-45}] ($(3,-5)+(90*\i+45:0.75cm)$) to [out={90*\i+135},in={90*\i-90}] ($(3,-5)+(90*\i+85:2cm)$);
\foreach \i in {1,...,4}
\draw[thick] ($(3,-5)+(90*\i+10:2cm)$) to [out={90*\i+180},in={90*\i-90}] ($(3,-5)+(90*\i+80:2cm)$);
\end{scope}

\draw[thick,blue] (-3,-5) circle[radius=1.75];
\foreach \j in {0,...,11}
\draw[thick,red] ($(-3,-5)+(30*\j:1.5cm)$)--($(-3,-5)+(30*\j:2cm)$);

\draw[thick,blue] (3,-5) circle[radius=1.75];
\foreach \j in {0,...,11}
\draw[thick,red] ($(3,-5)+(30*\j:1.5cm)$)--($(3,-5)+(30*\j:2cm)$);

\begin{scope}[yshift=1cm]
\draw[fill] (-3,0) circle[radius=0.05];
\def\pts{}
\foreach \x in {0,...,24}{\xdef\pts{\pts ($(-3,0)+(30*\x:1.75cm)+(15*\x:.5cm)$)};}
\draw[thick,blue] plot [smooth] coordinates {\pts};
\foreach \y in {0,...,23}
\draw[->,thick]($(-3,0)+(30*\y:1.75cm)+(15*\y:.5cm)$)--($(-3,0)+(30*\y:1.75cm)+(15*\y:1cm)$);
\foreach \y in {0,...,23}
\draw[->,thick,red]($(-3,0)+(30*\y:1.75cm)+(15*\y:.5cm)$)--($(-3,0)+(30*\y:1.75cm)+(15*\y:.5cm)+(30*\y:.5cm)$);
\end{scope}

\draw[thick,blue] (3,-.5) circle[radius=1.75];
\draw[fill] (3,-.5) circle[radius=0.05];
\foreach \k in {0,...,11}
\draw[thick,->] ($(3,-.5)+(30*\k:1.75cm)$)--($(3,-.5)+(30*\k:1.75cm)+(-30*\k:.5cm)$);
\foreach \j in {0,...,11}
\draw[thick,red,->] ($(3,-.5)+(30*\j:1.75cm)$)--($(3,-.5)+(30*\j:2.25cm)$);

\begin{scope}[yshift=.2cm]
\draw[fill] (3,4) circle[radius=0.05];
\draw[thick,blue] (3,4) circle[radius=1.75];
\foreach \k in {0,...,11}
\draw[thick,->] ($(3,4)+(30*\k:1.75cm)$)--($(3,4)+(30*\k:1.75cm)-(-30*\k:.5cm)$);
\foreach \j in {0,...,11}
\draw[thick,red,->] ($(3,4)+(30*\j:1.75cm)$)--($(3,4)+(30*\j:2.25cm)$);
\end{scope}

\end{tikzpicture}
\caption{An illustration of Lemma \ref{upstairsdownstairs} in the case $m=2$. On the bottom left is a non-orientable singularity with $\pind_\xi(x)=1$ and $\pind^\perp_\xi(x)=-1$. On the bottom right is an orientable singularity with $\pind_\xi(x)=-2$ and $\pind^\perp_\xi(x)=-4$.  Above these are the lifted vector fields on the double cover, restricted to a small bounding circle (shown in blue). Normal fields are shown in red.}
\label{fig:cover}
\end{figure}

For a smooth map $f:A^a\to B^b$ between closed oriented manifolds, denote by $f_!:H^*(A)\to H^{*+b-a}(B)$ the pushforward map (obtained from the induced map on homology by pre- and post-composing with Poincar\'e duality isomorphisms). With this notation, the Poincar\'e dual of $f_*[A]$ is $f_!(1)$.

Give $\widetilde{M}$ the Riemannian metric induced by the double cover $\pi:\widetilde{M}\to M$. This covering induces a $4$-fold covering $\overline{\pi}:ST\widetilde{M}|_{\widetilde{S}}\to PTM|_{S}$. Taking the pullback of this cover along the normal line $\eta:S \to PTM|_S$ results in a pullback square
\[
\xymatrix{
\widetilde{S}\sqcup\widetilde{S} \ar[r]^-{\widetilde{\eta}\sqcup -\widetilde{\eta}} \ar[d]_{p\sqcup p} & ST\widetilde{M}|_{\widetilde{S}} \ar[d]_{\overline{\pi}} \\
S \ar[r] ^-{\eta} & PTM|_S
}
\]
where $\widetilde{\eta}:\widetilde{S}\to ST\widetilde{M}|_{\widetilde{S}}$ denotes the outward unit normal to $\widetilde{S}$.

Applying the push-pull formula to this square, we obtain
\begin{align*}
\overline{\pi}^* \eta_!(1) & = \left(\widetilde{\eta}\sqcup -\widetilde{\eta}\right)_!\left(p\sqcup p\right)^*(1) \\
                         & = \widetilde{\eta}_!(1) + (-\widetilde{\eta})_!(1) \\
                         & = 2\,\widetilde{\eta}_!(1).
\end{align*}
Here we have used the fact that the inward pointing normal $-\widetilde{\eta}:\widetilde{S}\to ST\widetilde{M}|_{\widetilde{S}}$ is obtained from $\widetilde{\eta}$ by composing with a map of degree $(-1)^m=1$ on the fibres, hence they represent the same cohomology class.

A similar argument shows that $\overline{\pi}^*\xi_!(1) = 2\, v_!(1)$. Therefore we compute:
\begin{align*}
4\, \pind^\perp_\xi(x) & = 4\, \langle \eta_!(1)\cup \xi_!(1), [PTM|_S]\rangle \\
                     & = \langle \eta_!(1)\cup \xi_!(1), 4[PTM|_S]\rangle \\
                     & = \langle \eta_!(1)\cup \xi_!(1), \overline{\pi}_*[ST\widetilde{M}|_{\widetilde{S}}]\rangle \\
                     & = \langle \overline{\pi}^*\eta_!(1)\cup \overline{\pi}^*\xi_!(1), [ST\widetilde{M}|_{\widetilde{S}}]\rangle \\
                     & = \langle 2\widetilde{\eta}_!(1)\cup 2 v_!(1), [ST\widetilde{M}|_{\widetilde{S}}]\rangle \\
                     & = 4 \sum_{y\in \pi^{-1}(x)} \ind^\perp_v(y),
\end{align*}
which proves the Lemma.
 \end{proof}

  We can now complete the proof of Theorem \ref{maintheorem} in the case $m$ even. Let $k$ be the number of non-orientable singularities. By the Riemann--Hurwitz formula, $\chi(\widetilde{M})=2\chi(M)-k$. Note that the vector field $v$ on $\widetilde{M}$ has $2n-k$ zeroes. Using Lemmas \ref{lem:indperpind}, \ref{lem:projindperpprojind} and \ref{upstairsdownstairs} we calculate:
 \begin{align*}
 2\chi(M) & = k + \chi(\widetilde{M})  \\
          & = k + \sum_{i=1}^n \sum_{y\in \pi^{-1}(x_i)} \ind_v(y)\\
          & = k + \sum_{i=1}^n \sum_{y\in \pi^{-1}(x_i)} \left(\ind^\perp_v(y)+1\right)\\
          & = k +(2n-k) + \sum_{i=1}^n \sum_{y\in \pi^{-1}(x_i)} \ind^\perp_v(y)\\
          & = 2n + \sum_{i=1}^n \pind^\perp_\xi(x_i) \\
          & = 2n +\sum_{i=1}^n \left(\pind_\xi(x_i)-2\right) \\
          & = \sum_{i=1}^n \pind_\xi(x_i).
 \end{align*}

In the case $m$ odd, the statement of Theorem \ref{maintheorem} becomes
\[
\sum_{i=1}^n \pind_\xi(x_i) \equiv_2 0,
\]
which is a trivial consequence of \lemref{lem:oddpindzero} above. \qed

\begin{rem}
Theorem \ref{maintheorem} may seem
to hold little
content in the odd-dimensional case (besides the content of Lemma \ref{lem:oddpindzero}). We included its statement here because, in light of Example \ref{RP3}, we believe that it is the best that one can hope for.
\end{rem}

\end{document}